\documentclass[12pt,reqno]{amsart}
\usepackage{amssymb}
\usepackage[all]{xy}
\usepackage{hyperref}
\usepackage{enumitem}

\theoremstyle{plain}
\newtheorem{prop}{Proposition}
\newtheorem{thm}[prop]{Theorem}
\newtheorem{cor}[prop]{Corollary}
\newtheorem{lem}[prop]{Lemma}

\theoremstyle{definition}
\newtheorem{defi}[prop]{Definition}

\theoremstyle{remark}

\newtheorem{rem}[prop]{Remark}

\numberwithin{prop}{section}
\numberwithin{ques}{section}
\numberwithin{equation}{section}

%%%%Operators-small

%\DeclareMathOperator{\ourHNN}{HNN^\prime}

%\DeclareMathOperator{\pHNN}{HNN*}

%%%%Operators-functions

\DeclareMathOperator{\Subgr}{Subgr}

\DeclareMathOperator{\Res}{Res}
\DeclareMathOperator{\Cor}{Cor}

\DeclareMathOperator{\Ker}{Ker}

%%%%Operators-functors
\DeclareMathOperator{\id}{id}

%%%%%OperatorsMaths

\newcommand{\Bhat}{\hat{B}}

\newcommand{\alphahat}{\hat{\alpha}}

%\newcommand{\alphahat}{\hat{\alpha}}
%\newcommand{\alphahat}{\hat{\alpha}}
%\newcommand{\alphahat}{\hat{\alpha}}

%%%fonts
\newcommand{\ca}[1]{\mathcal{#1}}

%%%%%Fields
\newcommand{\F}{\mathbb{F}}

\newcommand{\bfB}{\mathbf{B}}
\newcommand{\bfA}{\mathbf{A}}
\newcommand{\bfG}{\mathbf{G}}

\newcommand{\suchthat}{\mathop{|\,}}

%%GroupAlgs

%%%%%%AbelianCategories

                        %%cohMacFunc

%Groups
                                   %%trivial group

                                %%Fpends
                            %%Zpends
 %%direction

             %%standard group with dir

%%bars

%%cals

%\newcommand{\caP}{\ca{P}}

%%tildes

%\newcommand{\tA}{\widetilde{A}}
%\newcommand{\tB}{\widetilde{B}}
%\newcommand{\tG}{\widetilde{G}}
%\newcommand{\tF}{\widetilde{F}}
%\newcommand{\tN}{\widetilde{N}}
%\newcommand{\tS}{\widetilde{S}}
%\newcommand{\tU}{\widetilde{U}}
%\newcommand{\tV}{\widetilde{V}}
%\newcommand{\tX}{\widetilde{X}}

%%u's

%%eus

%%bolds

%%junk
\newcommand{\argu}{\hbox to 7truept{\hrulefill}}

\newcommand{\G}{\ca{G}}

\newcommand{\C}{\ca{C}}

\newcommand{\normal}{\triangleleft}

\makeatletter
\def\moverlay{\mathpalette\mov@rlay}
\def\mov@rlay#1#2{\leavevmode\vtop{%
   \baselineskip\z@skip \lineskiplimit-\maxdimen
   \ialign{\hfil$\m@th#1##$\hfil\cr#2\crcr}}}
\newcommand{\charfusion}[3][\mathord]{
    #1{\ifx#1\mathop\vphantom{#2}\fi
        \mathpalette\mov@rlay{#2\cr#3}
      }
    \ifx#1\mathop\expandafter\displaylimits\fi}
\makeatother

\newcommand{\bigdotcup}{\charfusion[\mathop]{\bigcup}{\cdot}}

\hyphenation{mor-phism}
\hyphenation{epi-mor-phism}
%\hyphenation{HNN-ex-ten-sion}

%%%%%%%%%%%%%%%%%%%%%%%%%%%%%%%%%%%%%%%%%
\title{Relatively projective profinie groups}
\author{ Pavel A. Zalesskii}
\date{\today}

\address{P. A. Zalesski\u i\\
Department of Mathematics\\
University of Brasilia\\
70.910 Brasilia DF\\
Brazil}
\email{pz@mat.unb.br}
\begin{document}

\maketitle

\noindent MSC classification: 20E18

\noindent Key-words: profinite groups, prosolvable groups.

\section{Introduction}

It is well-known that
the Schreier theorem does not hold for free profinite groups,
i.e.,
a subgroup of a free profinite group need not be free.
Subgroups of free profinite groups are groups of cohomological dimension $\leq 1$;
they are called projective,
as they satisfy the same universal property as projective modules.

Similarly, the Kurosh Subgroup Theorem
does not hold for free products of profinite groups.
Subgroups of free profinite product are relatively projective that gave motivation to study relatively projective groups in the context of profinite groups and Galois theory (see \cite{Haran, HJ, HJP, HZ, Pop95}).

Following  \cite{HZ} a profinite group  $G$ acting continuously on a profinite space $T$ will be called a {\it profinite pile} (or a pair if one follows \cite[Chapter 7]{DD})  and denoted by $(G,T)$.  We say that $(G,T)$ is projective relatively to its point stabilizers if for given equivariant morphisms $(\varphi,\alpha)$
\begin{equation}\label{EP}
\xymatrix{&(G,T)\ar[d]^{\varphi}\ar@{-->}[dl]_{\psi}\\
             (B,Y)\ar[r]^{\alpha}& (A,X)}
\end{equation}             
             such that $\alpha$ is an epimorphism  injective on point stabilizers that induces a homeomorphism on the spaces of orbits, there exists an equivariant map $\psi:(G,T)\rightarrow (B,Y)$ such that $\alpha\psi=\varphi$. If $G$ is a pro-$\C$ group (where $\C$ is a class of finite groups closed for subgroups, quotients and abelian extensions) then the whole diagram is restricted to pro-$\C$ groups and the notion is the relative $\C$-projectivity. This definition is slightly differs from the definition in \cite{Haran}, \cite{HJ} and \cite{wilkes}; in \cite{wilkes} the definition is more restrictive for profinite groups of big cardinality as the author assumes the existence of a continuous section $\sigma:T/G\longrightarrow T$; in \cite{HJ}  the term is strongly projective; we refer the reader to  \cite[Proposition 5.7]{HZ} for the interrelation of our definition and the one in \cite{HJ}. 

The  objective of this note is to give a homological characterization of relatively prosolvable projective groups in the spirit of Ribes' cohomological characterization of free (amalgamated) products   \cite{R}. Let $\pi$ be a set of primes. We say that a pile $(G,T)$ is $\pi$-projective if the embedding problem (\ref{EP}) with $\Ker(\alpha)$ being an elementary abelian $p$-group is solvable for any $p\in\pi$. If $G$ is prosolvable, $\pi$-projectivity coincides with $\C$-projectivity, where $\C$ is the class of all solvable $\pi$-groups.

\begin{thm}\label{inverse limit of free products}
Let $(G,T)$  be a profinite pile. 
Then the following statements are equivalent:

\begin{enumerate}
\item[(i)]
$(G,T)$ is  $\pi$-projective;

\item[(ii)] For each prime $p\in\pi$ and every  simple $p$-primary $\F_pG$-module $M$ 
the map
$ H_i(G, \F_p[[T]]\otimes M)\to H_i(G, M)$,
induced from $t \mapsto 1$ for every $t \in \dot t\in T/G$,
is surjective for $i=2$ and injective for $i=1$. 
\end{enumerate}
\end{thm}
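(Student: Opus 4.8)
The plan is to collapse the two-part condition (ii) into a single vanishing statement and then match that vanishing with the solvability of the embedding problems in (i). Write $I$ for the kernel of the augmentation $\F_p[[T]]\to\F_p$, $t\mapsto 1$. Since the action of $G$ fixes the augmentation, $I$ is an $\F_pG$-submodule, and for every simple $p$-primary $\F_pG$-module $M$ tensoring over $\F_p$ gives a short exact sequence of profinite $\F_pG$-modules
\[
0\to I\otimes M\to \F_p[[T]]\otimes M\to M\to 0.
\]
Feeding this into the long exact homology sequence, surjectivity of $H_2(G,\F_p[[T]]\otimes M)\to H_2(G,M)$ is equivalent to the vanishing of the connecting map $H_2(G,M)\to H_1(G,I\otimes M)$, while injectivity of $H_1(G,\F_p[[T]]\otimes M)\to H_1(G,M)$ is equivalent to the vanishing of $H_1(G,I\otimes M)\to H_1(G,\F_p[[T]]\otimes M)$. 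A one-line diagram chase shows that the two together hold if and only if
\begin{equation}\label{ast}
H_1(G,I\otimes M)=0
\end{equation}
for every $p\in\pi$ and every simple $p$-primary $\F_pG$-module $M$. I read (\ref{ast}) as saying that the second relative homology of the pile vanishes, the natural relative analogue of $\ccd\le 1$.

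The heart of the argument is a dictionary between (\ref{ast}) and the embedding problems of (i). First I dévissage: filtering $\Ker(\alpha)$ by $G$-submodules with simple quotients, a general problem (\ref{EP}) with elementary abelian $p$-kernel is replaced by a tower of problems whose kernels are simple $p$-primary modules, each intermediate pile again being of the same type (an epimorphism injective on point stabilizers and a homeomorphism on orbit spaces), so that solvability of (\ref{EP}) is equivalent to solvability of every simple layer. For one layer $1\to N\to B\to A\to 1$, pulling back along $\varphi$ produces an extension of $(G,T)$ by $N$ together with the lifting problem for the equivariant map $T\to X$ to $T\to Y$; the total obstruction is a class in the relative second cohomology of the pile with coefficients in $N$. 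By Pontryagin duality for the finite module $N$ this relative $H^2$ is dual to $H_1(G,I\otimes N^{\vee})$, and as $N$ runs over all simple $p$-primary modules so does its dual $N^{\vee}$, so all these obstructions vanish precisely when (\ref{ast}) holds.

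With the dictionary in hand both implications are short. For (ii)$\Rightarrow$(i), assume (\ref{ast}); in a problem with elementary abelian $p$-kernel every simple layer has vanishing obstruction, hence lifts, and composing the layerwise lifts solves (\ref{EP}). For (i)$\Rightarrow$(ii) I argue contrapositively: a nonzero class in some $H_1(G,I\otimes M)$ corresponds under the duality to a nonzero relative $H^2$ class, which is realized by a genuine non-split pile extension with simple — in particular elementary abelian — $p$-kernel, i.e.\ by an unsolvable instance of (\ref{EP}), contradicting $\pi$-projectivity; hence every such group vanishes and (\ref{ast}), equivalently (ii), holds.

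The main obstacle I anticipate is precisely the middle step, carried out in the profinite equivariant setting: identifying the lifting obstruction of a pile embedding problem with a relative cohomology class built from the augmentation sequence, and conversely realizing every such class by an honest pile extension with the required elementary abelian $p$-kernel. Two technical points need care. First, the orbit decomposition of the coefficients, namely the continuous form of Shapiro's lemma
\[
H_*(G,\F_p[[T]]\otimes M)\cong \bigoplus_{\dot t\in T/G}H_*(G_t,M),
\]
with the comparison map to $H_*(G,M)$ being the sum of the corestrictions, must be set up sheaf-theoretically over the profinite orbit space $T/G$ so that continuity and the possibly infinite indexing are handled correctly. Second, the equivariant lift of the space component $T\to Y$ must be shown to carry no obstruction beyond the group-theoretic one; this is exactly where the hypotheses that $\alpha$ is injective on point stabilizers and induces a homeomorphism on orbit spaces are consumed, and verifying it cleanly is the delicate part of the proof.
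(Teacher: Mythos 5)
Your reduction of condition (ii) to the single vanishing statement $H_1(G,I\otimes M)=0$ (with $I$ the augmentation kernel of $\F_p[[T]]\to\F_p$) is correct and is a clean repackaging: in the long exact sequence, surjectivity of $\kappa_2$ says the connecting map $H_2(G,M)\to H_1(G,I\otimes M)$ is zero, injectivity of $\kappa_1$ says that same group maps to zero in $H_1(G,\F_p[[T]]\otimes M)$, and exactness forces both to hold exactly when $H_1(G,I\otimes M)=0$. The d\'evissage to simple kernels and the pull-back along $\varphi$ also match the paper (Remark 2.4(b) and the pull-back $\hat\bfB=\bfB\times_\bfA\bfG$ in the proof of the theorem, with rigidity of $\hat\alpha$ quoted from [HZ, Lemma 4.8] --- a point you should cite or verify rather than pass over).

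However, the heart of your argument --- the ``dictionary'' identifying solvability of a rigid embedding problem with the vanishing of a class in a relative second cohomology of the pile dual to $H_1(G,I\otimes M)$ --- is asserted, not proven, and you yourself flag it as the delicate part. That dictionary \emph{is} the theorem: it is the entire content of the paper's Lemma 3.3, and it does not come for free. Concretely, three things are missing. First, you never define the relative cohomology of a pile nor establish the Pontryagin duality with $H_1(G,I\otimes N^\vee)$; the paper avoids a relative theory altogether by splitting the obstruction in two: surjectivity of $\kappa_2$ is shown (via the Shapiro isomorphism $H_n(G,\F_p[[\dot t]]\otimes M^*)\cong H_n(G_t,M^*)$ and duality) to be equivalent to $\bigcap_{t\in T}\Ker(\Res^2_t)=0$, which kills the group-extension class $\beta\in H^2(G,M)$ since $\beta$ restricts to zero on every stabilizer; then injectivity of $\kappa_1$ handles the \emph{split} problems by extending the family of difference classes $\beta_t\in H^1(G_t,M)$ to a global class in $H^1(G,M)$. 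Second, the continuity issue you defer is genuinely hard: one must show the $\beta_t$ assemble into a continuous homomorphism on the profinite direct sum $\bigoplus_{\dot t\in T/G}H_1(G,\F_p[[\dot t]]\otimes M^*)$, which the paper proves by writing $(B,Y)$ and $(G,T)$ as inverse limits of finite piles and taking $\beta=\varprojlim\beta_i$ (the Claim inside Lemma 3.3); without this your duality statement has no content at infinite level. Third, your contrapositive direction needs to realize a nonzero kernel element of $\kappa_1$ by an \emph{honest rigid epimorphism of piles}, not merely by a cohomology class: the paper must descend to a finite quotient $(B_i,Y_i)$ with $Y_i=\bigcup_{t_i}B_i/\gamma_{t_i}({G_i}_{t_i})$ built from equivariantly chosen lifts $\gamma_{t_i}$, verify this is a rigid pile morphism, and pull back to $(G,T)$; nothing in your sketch constructs the $B$-space $Y$ or ensures rigidity. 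Your route (a single relative $H^2$ obstruction) is essentially Wilkes' approach via cohomology of profinite pairs, which the paper acknowledges as an independent proof --- so the plan is viable --- but as it stands the proposal postpones exactly the steps where all the mathematical difficulty is concentrated, and so it is not a proof.
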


\begin{cor}\label{solvable} Let $(G,T)$  be a prosolvable pile and $\C$ the class of all finite solvable groups. 
Then  $(G,T)$ is  $\C$-projective if and only if for each prime $p$ and every  simple $p$-primary $\F_pG$-module $M$ 
the map
$ H_i(G, \F_p[[T]]\otimes M)\to H_i(G, M)$,
induced from $t \mapsto 1$ for every $t \in \dot t$,
is surjective for $i=2$ and injective for $i=1$. 

\end{cor}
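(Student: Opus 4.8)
The plan is to obtain the corollary as the special case of Theorem~\ref{inverse limit of free products} in which $\pi$ is the set of \emph{all} primes. The first step is to match the two classes: when $\pi$ contains every prime, every finite group is a $\pi$-group, so the class of solvable $\pi$-groups is exactly the class $\C$ of all finite solvable groups appearing in the statement. Thus the two projectivity notions to be compared are $\C$-projectivity for this $\C$ and $\pi$-projectivity for this $\pi$.

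Next I would invoke the equivalence recorded in the introduction: for a prosolvable group $G$, $\pi$-projectivity of $(G,T)$ coincides with $\C$-projectivity, where $\C$ is the class of solvable $\pi$-groups. With $\pi$ the set of all primes this says precisely that $(G,T)$ is $\C$-projective (in the sense of the corollary) if and only if it is $\pi$-projective. The substance behind this equivalence is a d\'evissage: the definition of $\pi$-projectivity only demands that the embedding problem~(\ref{EP}) be solvable when $\Ker(\alpha)$ is elementary abelian, and to recover solvability for an arbitrary finite solvable kernel one refines $\Ker(\alpha)$ by a chain of subgroups normal in $B$ with elementary abelian factors and then lifts through the induced intermediate piles one layer at a time. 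Since $G$ is prosolvable and $\pi$ is the full set of primes, every finite quotient arising in this process again lies in $\C$, so no obstruction outside the permitted class can occur, and an inverse limit over the filtration yields the desired continuous solution $\psi$.

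Granting this equivalence, the corollary follows immediately: $(G,T)$ is $\C$-projective if and only if it is $\pi$-projective, and by Theorem~\ref{inverse limit of free products} this holds if and only if, for every $p\in\pi$ and every simple $p$-primary $\F_pG$-module $M$, the map $H_i(G,\F_p[[T]]\otimes M)\to H_i(G,M)$ is surjective for $i=2$ and injective for $i=1$. As $\pi$ consists of all primes, the condition ``$p\in\pi$'' reduces to ``$p$ an arbitrary prime'', which is exactly the homological criterion asserted in the statement. I expect the only genuinely delicate point to be the d\'evissage identifying $\pi$-projectivity with $\C$-projectivity --- specifically, checking that the layer-by-layer lifting respects the equivariance and orbit-space constraints imposed on $\alpha$ and survives passage to the inverse limit; the reduction to the theorem itself is then purely formal.
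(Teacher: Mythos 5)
Your proposal is correct and follows essentially the same route as the paper: the d\'evissage you describe is exactly Remark~\ref{rel proj}(b), which together with the fact that a minimal normal subgroup of a finite solvable group is elementary abelian reduces $\C$-projectivity to $\pi$-projectivity with $\pi$ the set of all primes, and Theorem~\ref{inverse limit of free products} then gives the homological criterion. The only cosmetic difference is your mention of an inverse limit over the filtration, which is unnecessary since the kernel is finite and the chain terminates after finitely many layers.
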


The structure of the paper is the following. Section 2 is dedicated to auxiliary results on relatively projective groups. Theorem \ref{inverse limit of free products} and Corollary \ref{solvable} are proved in Section 3.  We list open questions about relatively projective groups in Section 4. 

\bigskip

\noindent{\bf Acknowledgements.} 
\begin{enumerate}
\item[(i)]
The idea of the homological characterization of relatively projective groups arose when the author was working jointly with Dan Haran on the  article \cite{HZ};  Sections 2 and 3 of this paper benefited a lot from numerous discussions with him. 

\item[(ii)] During the work on the manuscript the author learned that Theorem \ref{inverse limit of free products} was independetly obtained by Gareth Wilkes \cite{wilkes}  using cohomology of profinite pairs; I am independent to him for sharing his early draft with me.  

\item[(iii)] This paper was written when the author visited the Department of Pure Mathematics and Mathematical Statistics of the University of Cambridge; the  author thanks Henry Wilton and the department for the hospitality. 
\end{enumerate}

\section{Relatively projective groups}

We shall consider here a continuous action on the right of a profinite group $G$ on a profinite space $T$ and use the notation $t^g$ for the translation.

\begin{defi}\label{pile}
A \textbf{pile}
$\bfG = (G,T)$ 
consists of a profinite group $G$,
a profinite space $T$, 
and a continuous action of $G$ on $T$ (from the right).

A pile
$\bfG = (G,T)$
is \textbf{finite} if
both $G$ and $T$ are finite.

A \textbf{morphism} of group piles
$\alpha \colon \bfB=(B,Y) \to \bfA=(A,X)$
consists of
a group homomorphism
$\alpha \colon B \to A$
and a continuous map
$\alpha \colon Y \to X$
such that
$\alpha(y^b) = \alpha(y)^{\alpha(b)}$
for all $y \in Y$ and $b \in B$.

The \textbf{kernel $\Ker \alpha$ of $\alpha$}
is the kernel of the group homomorphism $\alpha \colon B \to A$.

The above morphism $\alpha$ is an \textbf{epimorphism} if
$\alpha(B) = A$, $\alpha(Y) = X$,
and for every $x \in X$ there is $y \in Y$ such that
$\alpha(y) = x$ and $\alpha(B_y) = A_x$.
It is \textbf{rigid}, if
$\alpha$ maps $B_y$ isomorphically onto $A_{\alpha(y)}$,
for all $y \in Y$,
and the induced map of the orbit spaces $Y/B \to X/A$ is a homeomorphism.
\end{defi}

\begin{defi}\label{proj pile}
An \textbf{embedding problem} for a pile $\bfG$ is a pair
\begin{equation}
(\varphi\colon\bfG\to\bfA,\ \alpha\colon\bfB\to\bfA)
\end{equation}
of morphisms of group piles
such that $\alpha$ is an epimorphism.
It is \textbf{finite}, if $\bfB$ is finite and {\bf rigid} if $\alpha$ is rigid. It is a $\C$-embedding problem if $G,B,A$ are pro-$\C$.  It is a $\pi$-embedding problem if $\Ker\alpha$ is an abelian $\pi$-group, where $\pi$ is a set of primes.

A \textbf{solution} of \eqref{EP} is a
morphism $\gamma\colon \bfG\to \bfB$
such that $\alpha\circ\gamma=\varphi$.

A pile $\bfG$ is $\C$-projective (resp. $\pi$-\textbf{projective}),
if every finite rigid $\C$-embedding problem for $\bfG$ (resp. every $\pi$-embedding problem) 
has a solution. 

\end{defi}

\begin{defi}
A group homomorphism (resp. epimorphism) $\gamma:G\rightarrow B$ such that $\alpha\circ\gamma=\varphi$ will be called a weak solution (resp.  proper weak solution) of \eqref{EP}.

 Following  \cite{HJ} we shall say that a pro-$\C$ group $G$ is $\G$-projective (resp.properly $\G$-projective)  if  every finite $\C$-embedding problem has a weak solution (resp. proper weak solution). If  every finite rigid $\pi$-embedding problem has a weak solution (resp. proper weak solution) then $G$ shall be called $\G$-projective (resp.properly $\G$-projective) over $\pi$.
\end{defi}
\medskip

%Note that a morphism $\bfB\longrightarrow \bfA$
% implies
%$\alpha(B_y) \le A_{\alpha(y)}$
%for every $y \in Y$;
%in particular,
%denoting $\A = \{A_x\}_{x \in X}$
%and $\B = \{B_y\}_{y \in Y}$,
%we have
%$\alpha(\B') \subseteq \A'$.

\begin{rem}\label{rel proj}
(a)
Of course if $\bfG$ is projective then $G$ is $\G$-projective.  Moreover, the projectivity of $\bfG$ implies strongly $\G$-projectivity of $G$ in the sense of \cite{HJ} which is called $\G$-projectivity in \cite{HZ}. Conversely, if $G$ is strongly $\G$-projective  and $G_t\neq G_s$ for any pair $t\neq s$ then $\bfG$ is projective (see \cite[Proposition 5.7]{HZ}).

\medskip
(b)
$\bfG$ is $\pi$-projective,  if
every finite rigid embedding problem \eqref{EP} for $\bfG$,
in which $\Ker(\alpha)$ is a minimal normal subgroup of $B$,
has a solution.  So we can always assume that $\Ker(\alpha)$ is elementary abelian.

This follows by induction on $|\Ker(\alpha)|$.
If $\Ker(\alpha)$ is not minimal,
it has a subgroup $C \normal B$,
such that $1 \ne C \ne B$.
Then $\alpha$ factors into rigid epimorphisms $\alpha_1, \alpha_2$
in the diagram below, 
where $\bfB' = (B/C,T/C)$.
By the induction hypothesis
there is a solution  $\gamma_2$ of the embedding problem
$(\varphi, \alpha_2)$,
that gives the embedding problem 
$(\gamma_2, \alpha_1)$,
whose solution  $\gamma$ gives a solution of \eqref{EP}:
\begin{equation*}
\xymatrix{
&& \bfG \ar@{.>}[lld]_{\gamma} \ar@{.>}[ld]^{\gamma_2} \ar[d]^{\varphi}
\\
\bfB \ar[r]_{\alpha_1} & \bfB' \ar[r]_{\alpha_2} & \bfA \rlap{.}
\\
}
\end{equation*} 

\medskip
(c) A profinite group  $G$ is $\G$-projective over $\pi$ if and only if its free profinite product  $G\amalg F_\omega$ is properly $\G$-projective over $\pi$, where $F_\omega$ is free profinite  of countable rank (if we are in the class of pro-$\C$ groups,  then the free product and  $F_\omega$ should be considered in this class).  Indeed,  let $\varphi_\omega:G\amalg F_\omega\longrightarrow A$ be an epimorphism and $\varphi$ its restriction to $G$.   Since any finite embedding problem is properly solvable for $F_\omega$,   a weak solution of $(\alpha,\varphi)$ can be always extended to a proper solution of the induced embedding problem $(\alpha,\varphi_{\omega})$,  and any proper solution of $(\alpha,\varphi_{\omega})$ can be restricted to $G$ producing a weak solution of $(\alpha,  \varphi)$.  
\end{rem}

We finish the section with proposition that will be used in Section 4.

\begin{prop}\label{into free prod} Let $\C$ be a class of finite groups closed for subgroups, quotients and extensions. Let $(G,T)$ be a $\C$-projective pile such that there exists a continuous section $\sigma:T/G\longrightarrow T$. Then $G$ embeds into a free pro-$\C$-product $H=\coprod_{s\in Im(\sigma)} G_s\amalg F$, where $F$ is a free pro-$\C$ group of rank $d(G)$, the minimal number of generators of $G$. Moreover, $\{G_t\mid t\in T\}=\{H_s^h\cap G\mid h\in H, s\in Im(\sigma)\}$.

\end{prop}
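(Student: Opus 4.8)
The plan is to realize $G$ as a section of a rigid epimorphism onto $\bfG=(G,T)$ coming from the free pro-$\C$ product $H$, and then to read off the stabilizers. The embedding will be produced purely from the $\C$-projectivity of $\bfG$; the stabilizer statement will require, in addition, the structure theory of subgroups of free pro-$\C$ products.

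First I would assemble the free product as a pile. Since $\sigma$ is a continuous section, $\operatorname{Im}(\sigma)$ is homeomorphic to $T/G$, a profinite space, and $s\mapsto G_s$ is a continuous family of closed subgroups of $G$ over $\operatorname{Im}(\sigma)$, i.e. exactly the sheaf of groups needed to form the free pro-$\C$ product $\coprod_{s\in\operatorname{Im}(\sigma)}G_s$ indexed by a profinite space. Adjoining a free pro-$\C$ group $F$ of rank $d(G)$ gives $H=\coprod_{s\in\operatorname{Im}(\sigma)}G_s\amalg F$, which is pro-$\C$ because each $G_s\le G$ is pro-$\C$ and $\C$ is closed under subgroups, quotients and extensions. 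For $\bfH=(H,Y)$ I would take the standard coset pile $Y=\coprod_{s\in\operatorname{Im}(\sigma)}G_s\backslash H$ with the right multiplication action; then $Y/H\cong\operatorname{Im}(\sigma)$ and the stabilizer of the point $G_sh$ is the conjugate $h^{-1}G_sh$ of the free factor, so the point stabilizers of $\bfH$ are precisely the conjugates of the $G_s$.

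Next I would build a rigid epimorphism $\pi\colon\bfH\to\bfG$. On groups let $\pi$ be the inclusion $G_s\hookrightarrow G$ on each factor and a fixed epimorphism $F\twoheadrightarrow G$ from a minimal generating set; the presence of $F$ makes $\pi$ surjective. On spaces set $\pi(G_sh)=s^{\pi(h)}$, which is well defined and $H$-equivariant because $G_s$ fixes $s$, and surjective because $\sigma$ meets every orbit. A direct computation shows $\pi$ carries the stabilizer $h^{-1}G_sh$ isomorphically onto $G_{s^{\pi(h)}}$ and induces on orbit spaces the homeomorphism $\operatorname{Im}(\sigma)\cong T/G$ inverse to $\sigma$; hence $\pi$ is a rigid $\C$-epimorphism. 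Now $(\id_{\bfG},\pi)$ is a rigid $\C$-embedding problem for $\bfG$. It is not finite, but a standard inverse limit argument (writing $\bfH$ and the target $\bfG$ as cofiltered limits of finite rigid quotients over which the problem becomes finite, and using compactness of the nonempty finite solution sets supplied by $\C$-projectivity of $\bfG$) produces a pile morphism $\gamma\colon\bfG\to\bfH$ with $\pi\gamma=\id$. In particular $\gamma\colon G\to H$ is a split monomorphism, so $G$ embeds into $H$.

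Finally, for the stabilizer statement I would identify $G$ with $\gamma(G)$. The inclusion $\{G_t\}\subseteq\{H_s^h\cap G\}$ is the easy half: since $\gamma$ is a pile morphism, $\gamma(G_t)\le H_{\gamma(t)}$, and $H_{\gamma(t)}$ is a conjugate $H_s^h$ of some free factor; conversely, if $\gamma(g)\in H_{\gamma(t)}$ then $\gamma(t^g)=\gamma(t)$, and injectivity of $\gamma$ on $T$ (a consequence of $\pi\gamma=\id$) forces $g\in G_t$, so $G_t=\gamma(G)\cap H_{\gamma(t)}=G\cap H_s^h$. The reverse inclusion is the main obstacle: one must show that every intersection $G\cap H_s^h$ is again a point stabilizer $G_t$. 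Here I would observe that $G\cap H_s^h$ is exactly the $G$-stabilizer of the point $G_sh\in Y$ for the $G$-action induced by $\gamma$, and then match these $G$-stabilizers on $Y$ with the point stabilizers on $T$, controlling the intersections through the profinite Kurosh subgroup theorem for free pro-$\C$ products together with the section $\pi$ (with the trivial intersections supplied by the free orbits arising from $F$). This identification, rather than the embedding argument itself, is the delicate step, and it is where the structure theory of profinite trees does the real work.
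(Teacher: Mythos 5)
Your construction is the paper's own, step for step: the paper takes the epimorphism $\alpha\colon H\to G$ sending each factor $H_s$ isomorphically to $G_s$ and $F$ onto $G$, forms the coset space $S=\bigcup_{s\in Im(\sigma)}H_s\backslash H$ (citing \cite[Construction 4.3]{HZ} for the fact that $(H,S)$ is a pile), extends $\alpha$ to the space by $\alpha(H_sh)=s^{\alpha(h)}$ to get a rigid epimorphism of piles, and solves the embedding problem $(\id,\alpha)$ by $\C$-projectivity, concluding that the solution ``is clearly an embedding.'' You are more careful than the paper in one genuinely needed spot: since Definition \ref{proj pile} only postulates solvability of \emph{finite} rigid $\C$-embedding problems, the problem $(\id,\alpha)$ with infinite $(H,S)$ does require the inverse-limit/compactness reduction you indicate, which the paper applies silently.

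On the ``Moreover'' clause the paper offers no argument at all, so your honest flagging of the reverse inclusion as the delicate step is not a defect relative to the paper --- but you can close it more cheaply than by the full Kurosh theorem. Identify $G$ with $\gamma(G)$; rigidity of $\alpha$ together with $\alpha\gamma=\id$ forces $\gamma(G_t)=H_{\gamma(t)}$, a \emph{full} conjugate $H_s^{h'}$ of a factor sitting inside $\gamma(G)$ (this strengthens your $\subseteq$ half: $G_t$ is not merely an intersection, it \emph{is} the whole stabilizer $H_{\gamma(t)}$). For arbitrary $s,h$, any $\gamma(g)\in H_s^h$ fixes $H_sh$, hence $g$ fixes $t:=s^{\alpha(h)}$, so $H_s^h\cap\gamma(G)=H_s^h\cap\gamma(G_t)=H_s^h\cap H_s^{h'}$, which equals $\gamma(G_t)$ if $H_sh=H_sh'$ and is trivial otherwise, by the standard fact that distinct conjugates of free factors of a free pro-$\C$ product intersect trivially when $\C$ is extension-closed (proved exactly by your ``profinite trees'' instinct: an element fixing two distinct vertices of the standard tree fixes an edge, and edge stabilizers are trivial). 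Note the byproduct: the trivial case really occurs, so the stated equality of the two families of subgroups holds literally only up to the trivial subgroup --- e.g.\ for $G=\Z_p$ acting trivially on a one-point $T$ one gets $\{G_t\}=\{\Z_p\}$ while $H_s^h\cap G=1$ whenever $H_sh\neq H_sh'$ --- a caveat the statement and the paper's one-line proof both gloss over (compare part (ii) of the theorem in Section 4, where the paper does insert ``non-trivial'').
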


\begin{proof} Let $\alpha:H\longrightarrow G$ be an epimorphism that sends $H_s$ on their isomorphic copies in $G$ and $F$ onto $G$. Put $S=\bigcup_{s\in Im(\sigma)} H_s\backslash H$. By \cite[Constriction 4.3]{HZ} $(H,S)$ is a pile and so extending $\alpha$ to $S\rightarrow T$ by $\alpha(H_sh)=s^{\alpha(h)}$ one gets a rigid epimorphism $\alpha:(H,S)\longrightarrow (G,T)$. Since $(G,T)$ is $\C$-projective  the embedding problem $(id,\alpha)$ has a solution which is clearly an embedding.

\end{proof} 

\section{Homological characterization}

Let $G$ be a profinite group and
$\{G_t \suchthat t\in T\}$
be a family of subgroups indexed by a profinite space $T$.
Following \cite[Section 5.2]{R 2017} we say that
it is \textbf{continuous}
if for any open subgroup $U$ of $G$
the subset $\{ t\in T \suchthat G_x \le U\}$ is open.

\begin{lem}[{\cite[Lemma 5.2.1]{R 2017}}]\label{continuous family}
Let $G$ be a profinite group and
let $\{G_t \suchthat t\in T\}$ be a collection of subgroups
indexed by a profinite space $T$.
Then the following conditions are equivalent:

\begin{enumerate}
\item[(a)] $\{G_t \suchthat t\in T\}$ is continuous;

\item[(b)] The set $\hat\G=\{(g,t)\in G\times T \suchthat t\in T, g\in G_t\}$ is closed in $G\times T$;

\item[(c)] The map $\varphi \colon T\to \Subgr(G)$, given by $\varphi(t)=G_t$, is continuous, where $\Subgr(G)$ is endowed with the \'etale topology;

\item[(d)] $\bigcup_{t\in T} G_t$ is closed in $G$.
\end{enumerate}
\end{lem}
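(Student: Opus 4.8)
The plan is to run the cycle of implications $\mathrm{(a)}\Rightarrow\mathrm{(c)}\Rightarrow\mathrm{(b)}\Rightarrow\mathrm{(d)}\Rightarrow\mathrm{(a)}$, and I expect the closing arrow to carry essentially all the difficulty.

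The implication $\mathrm{(a)}\Rightarrow\mathrm{(c)}$ I would obtain by unwinding the definition of the \'etale topology: a map into $\Subgr(G)$ is continuous exactly when the preimages of the subbasic open sets $\{H\suchthat H\le U\}$ (with $U$ an open subgroup of $G$) are open, and the $\varphi$-preimage of such a set is precisely $\{t\suchthat G_t\le U\}$. Thus $\mathrm{(a)}$ and $\mathrm{(c)}$ are really two readings of the same condition, and the reverse direction $\mathrm{(c)}\Rightarrow\mathrm{(a)}$ is obtained the same way.

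For $\mathrm{(c)}\Rightarrow\mathrm{(b)}$ I would separate a point from the graph. Given $(g_0,t_0)\notin\hat\G$ we have $g_0\notin G_{t_0}$; since in a profinite group every closed subgroup is the intersection of the open subgroups containing it, there is an open subgroup $V$ with $G_{t_0}\le V$ and $g_0\notin V$. Continuity of $\varphi$ makes $N=\{t\suchthat G_t\le V\}$ an open neighbourhood of $t_0$, and for $t\in N$ the coset $g_0V$ is disjoint from $V\supseteq G_t$; hence $g_0V\times N$ is a neighbourhood of $(g_0,t_0)$ disjoint from $\hat\G$, so $\hat\G$ is closed. The implication $\mathrm{(b)}\Rightarrow\mathrm{(d)}$ is then immediate: $\bigcup_{t\in T}G_t$ is the image of the closed (hence compact) set $\hat\G$ under the projection $G\times T\to G$, and a continuous image of a compact set is compact, hence closed.

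The step I expect to be the main obstacle is $\mathrm{(d)}\Rightarrow\mathrm{(a)}$, i.e.\ upgrading the closedness of the union to full continuity of the family. The subtlety is that $\bigcup_{t}G_t$ records nothing about which index produces which subgroup, so closedness of the union is a priori weaker than closedness of the graph. My approach would be to descend to the finite quotients $G/U$, with $U$ open normal in $G$: there $\Subgr(G/U)$ is finite, and the target becomes showing that $t\mapsto G_tU/U$ is locally constant, after which a compactness/inverse-limit argument over all $U$ would deliver $\mathrm{(a)}$. The crux---and the place where I would need to exploit the profinite structure of $G$ most carefully---is extracting this local constancy from the mere closedness of the union, and I would treat it as the genuine technical heart of the proof.
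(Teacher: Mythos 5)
First, a point of orientation: the paper contains no proof of this lemma at all --- it is quoted with a citation to \cite[Lemma 5.2.1]{R 2017} --- so your attempt can only be measured against the standard argument and against the truth of the statement itself. Your three arcs $\mathrm{(a)}\Leftrightarrow\mathrm{(c)}$, $\mathrm{(c)}\Rightarrow\mathrm{(b)}$ and $\mathrm{(b)}\Rightarrow\mathrm{(d)}$ are correct and are essentially the textbook ones: $\mathrm{(a)}\Leftrightarrow\mathrm{(c)}$ is indeed just the unwinding of the \'etale topology, and the separation argument for $\mathrm{(c)}\Rightarrow\mathrm{(b)}$ is right, with one caveat you should make explicit: when you write $G_{t_0}$ as an intersection of open subgroups you are using that each $G_t$ is a \emph{closed} subgroup. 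That hypothesis (present in Ribes, omitted in the statement as quoted) is genuinely needed, since $\mathrm{(b)}$ forces every $G_t$ to be closed while $\mathrm{(a)}$ and $\mathrm{(c)}$ do not. The compact-projection proof of $\mathrm{(b)}\Rightarrow\mathrm{(d)}$ is fine.

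The genuine gap is $\mathrm{(d)}\Rightarrow\mathrm{(a)}$, which you leave as a plan rather than a proof --- and here your own warning, that the union ``records nothing about which index produces which subgroup,'' is not a technical difficulty but an outright obstruction: that implication is false as stated. Take $T=\{0\}\cup\{1/n \suchthat n\in\N\}$, any nontrivial profinite group $G$ (even $G=\Z/2\Z$), and set $G_{1/n}=G$ for all $n$ and $G_0=1$. Then $\bigcup_{t\in T}G_t=G$ is closed, so $\mathrm{(d)}$ holds; but for a proper open subgroup $U$ the set $\{t\in T \suchthat G_t\le U\}=\{0\}$ is not open, the points $(g,1/n)$ with $g\ne 1$ converge to $(g,0)\notin\hat\G$, so $\hat\G$ is not closed, and $\varphi$ is not \'etale-continuous: $\mathrm{(a)}$, $\mathrm{(b)}$, $\mathrm{(c)}$ all fail. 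Injectivity of $t\mapsto G_t$ does not rescue it: in $G=\prod_{n\in\N}\Z/2\Z$ put $G_n=\langle e_1,e_{n+1}\rangle$ and $G_\infty=1$, where $e_n$ is the $n$th coordinate generator; the union is again closed and $\mathrm{(a)}$ again fails. In particular your proposed reduction to local constancy of $t\mapsto G_tU/U$ cannot be carried out: in the first example that map takes the value $GU/U$ at every $1/n$ and the trivial value at $0$, so it is not locally constant, while the union is blind to this jump. So $\mathrm{(d)}$ is strictly weaker than $\mathrm{(a)}$--$\mathrm{(c)}$: it is a one-way consequence (your projection argument), and the four-fold equivalence as reproduced in the paper is not correct as stated --- consistently with which, the paper never actually uses $\mathrm{(d)}\Rightarrow\mathrm{(a)}$, invoking only \cite[Lemma 5.2.2]{R 2017} for continuity of stabilizer families. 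The honest conclusion of your write-up should therefore be that the cycle closes only through $\mathrm{(a)}\Leftrightarrow\mathrm{(b)}\Leftrightarrow\mathrm{(c)}\Rightarrow\mathrm{(d)}$, not that the last arc awaits a cleverer compactness argument.
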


If $(G,T)$ is a pile then  denoting by $G_t$ the $G$-stabilizer of $t$,
for every $t \in T$,
we note that
$\G = \{G_t\}_{t \in T}$
is a continuous family of closed subgroups of $G$
(\cite[Lemma 5.2.2]{R 2017})
closed under the conjugation in $G$,
such that
$G_{t^g} = G_t^g$ for all $t \in T$ and $g \in G$. 

Having a continuous family $\{G_t\mid t\in T\}$ of subgroups of a pro-$\C$ group $G$ one can define a free pro-$\C$ product $\coprod_{t\in T} G_t$ (see \cite[Chapter 5]{R 2017}). If $\C$ consists of abelian groups, then the free pro-$\C$ product becomes a profinite direct sum $\bigoplus_{t\in T} G_t$. This also extends to the case of pro-$\C$ modules.

\begin{lem}\label{inf EP}
Let $\bfG=(G,T)$ be a pile  
and let
$\G = \{G_t\}_{t \in T}$ be the family of $G$-stabilizers.
Consider the following rigid embedding problem
\begin{equation}\label{id-EP}
\big(\id \colon \bfG\to \bfG,\alpha\colon \bfB=(B,Y)\to \bfG\big) 
\end{equation}
with  $M=\Ker(\alpha)$ finite minimal abelian normal subgroup (i.e. an elementary abelian  of order $p^n$). Let 
$$\kappa_i \colon 
H_i(G, \F_p[[T]]\otimes M^*)\to H_i(G, M^*)$$
be  the homology maps induced from $T \mapsto 1$,
for $i = 1,2$, where $M^*$ is dual of $M$. 
\begin{itemize}
\item[(a)] 
 $\kappa_2$ is surjective if and only if for any embedding problem \eqref{id-EP} 
 the group homomorphism $\alpha \colon B \to G$
splits
and hence $B \cong M \rtimes G$.
\item[(b)]
    $\kappa_1$ is injective if and only if every split embedding problem
\eqref{id-EP} has a solution.
\end{itemize}
\end{lem}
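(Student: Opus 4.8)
The plan is to convert both homological conditions into statements about the restriction maps to the point stabilizers, and then to read off the splitting and solvability of the embedding problems from the injectivity, resp. surjectivity, of these restrictions. Writing $\G=\{G_t\}_{t\in T}$ for the continuous family of stabilizers, the first observation is that the augmentation $\F_p[[T]]\to\F_p$, $t\mapsto 1$, together with the projection formula exhibits $\F_p[[T]]\otimes M^*\cong\ind_T^G(\Res M^*)$ as the module induced from the restrictions of $M^*$ to the stabilizers. By the continuous Shapiro Lemma for the family $\G$ one identifies $H_i(G,\F_p[[T]]\otimes M^*)$ with the homology of $\G$ with coefficients in $M^*$, under which $\kappa_i$ becomes the corestriction (transfer). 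Since $M$ is finite, Pontryagin duality gives natural isomorphisms $H_i(G,M^*)^*\cong H^i(G,M)$, and dually $H_i(G,\F_p[[T]]\otimes M^*)^*\cong H^i(G,C(T,M))\cong\prod_t H^i(G_t,M)$, the product being taken over the continuous family; under these identifications $\kappa_i$ dualizes to the restriction map $\Res_i\colon H^i(G,M)\to\prod_t H^i(G_t,M)$. Hence $\kappa_2$ is surjective if and only if $\Res_2$ is injective, and $\kappa_1$ is injective if and only if $\Res_1$ is surjective. This reduces the Lemma to two purely cohomological reformulations.

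For part (a) I would record the dictionary between rigid embedding problems \eqref{id-EP} with kernel $M$ and second cohomology classes. Such a problem gives an extension $1\to M\to B\overset{\alpha}{\to}G\to 1$, hence a class $c\in H^2(G,M)$, and conversely every class is so realized. The rigidity of $\alpha$ means it restricts to an isomorphism $B_y\to G_{\alpha(y)}$ for every $y\in Y$, i.e. the extension is split over each stabilizer, so $\Res_{G_t}(c)=0$ for all $t$; conversely any $c$ with $\Res_{G_t}(c)=0$ for all $t$ can be equipped with a compatible rigid pile structure $Y$ assembled from the local splittings, producing a genuine rigid embedding problem. Since $\alpha\colon B\to G$ splits exactly when $c=0$, the assertion ``every embedding problem \eqref{id-EP} has $\alpha$ splitting'' is equivalent to ``$c=0$ whenever $\Res_{G_t}(c)=0$ for all $t$'', that is, to injectivity of $\Res_2$. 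Combined with the reduction above this proves (a).

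For part (b) I would set $B=M\rtimes G$ and analyse the rigid pile structures sitting over this split extension. A rigid pile $(B,Y)$ over $(G,T)$ assigns to each $t$ a complement of $M$ over $G_t$, i.e. a class $\xi_t\in H^1(G_t,M)$, and these vary continuously, so the pile structure is encoded by an element $(\xi_t)_t\in\prod_t H^1(G_t,M)$; conversely every continuous family of local complements arises this way. A solution $\gamma\colon(G,T)\to(B,Y)$ is a group-theoretic section of $\alpha$, carrying a class $\eta\in H^1(G,M)$, whose restriction to each stabilizer matches the prescribed complement, i.e. $\Res_{G_t}(\eta)=\xi_t$. Thus a given split embedding problem is solvable precisely when its datum $(\xi_t)_t$ lies in the image of $\Res_1$, and every split embedding problem is solvable precisely when $\Res_1$ is surjective, which together with the reduction yields (b).

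The routine parts---the projection-formula identification of $\F_p[[T]]\otimes M^*$, the compatibility of $\kappa_i$ with (co)restriction, and the naturality of Pontryagin duality---are standard. The main obstacle I anticipate is the careful book-keeping of the profinite structure in the dictionary between piles and cohomology: verifying that the local splittings in (a) and the local complement classes in (b) assemble into a \emph{continuous} family over the profinite space $T$, so that the rigid pile $Y$ genuinely exists and so that ``restriction to each stabilizer'' is correctly modelled by the coinduced module $C(T,M)$ and the continuous product $\prod_t H^i(G_t,M)$. This is exactly where the hypotheses that $\G$ is a continuous family and that $\alpha$ is rigid (hence a homeomorphism on orbit spaces) are used.
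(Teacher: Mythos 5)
Your proposal is correct and follows essentially the same route as the paper: after Shapiro's lemma and the orbit decomposition of $\F_p[[T]]\otimes M^*$, the paper likewise dualizes $\kappa_2$ into the joint restriction $H^2(G,M)\to \prod_{t}H^2(G_t,M)$ (extension classes, with rigidity giving local triviality) and $\kappa_1$ into restriction on $H^1$ (local complement classes $\beta_t$), so your cohomological rewriting via $C(T,M)$ is just the Pontryagin dual of its homological bookkeeping. The continuity issues you flag at the end are precisely what the paper's Claim and its pull-back construction address, by writing $(B,Y)$ and $(G,T)$ as inverse limits of finite piles to show that the local classes assemble into a continuous functional on the profinite direct sum and, conversely, to realize a functional detecting a kernel element of $\kappa_1$ as an actual rigid pile at a finite level.
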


\begin{proof}
(a)
We just need to show that the extension
$\alpha \colon B \to G$,
as an element $\beta$ of $H^2(G,M)$,
is trivial if and only if $\kappa_2$ is surjective.

Since $\alpha$ is rigid, $\alpha_{|G_y}$ is injective, for all $y\in Y$ which means that 
for every $G_t$
there exists $y\in Y$ and an isomorphism  $\gamma_t: G_t\longrightarrow B_y$ such that
 $\alpha \circ \gamma_t=\id$.
Note that the $G$-orbit
$\dot t$ of $t$ is homeomorphic to $G_t\backslash G$,
hence
$\F_p[[\dot t]] \cong F_p[[G_t\backslash G]]$. Observe also that $Tor_n^{\F_p[[G]]}(M^*,\F_p[[\dot t]])$ is naturally isomorphic to $H_n(G, \F_p[[T]]\otimes M^*)$.
Therefore,
by \cite[Lemma 6.10.10]{RZ} for each $\dot t\in T/G$
one has the following commutative diagram
\begin{equation}\label{Shapiro}
\xymatrix{
H_n(G, \F_p[[\dot t]]\otimes M^*)\ar[dr]^{\epsilon_n}\ar[dd]^{\varphi_n} & \\
            & H_n(G,M^*) \\
H_n(G_t,M^*) \rlap{,} \ar[ur]^{\Cor_t} &
}
\end{equation}
where
$\varphi_n$ is an isomorphism
and $\epsilon_n$ is induced from the augmentation map
$\epsilon \colon \F_p[[\dot t]]\rightarrow \F_p$,
for every $n \ge 0$.

By \cite[Proposition 5.5.4]{R 2017} $\F_p[[T]]=\bigoplus_{\dot t \in T/G}
\F_p[[\dot t]])$ and so $$H_i(G, \F_p[[T]]\otimes M^*)=\oplus_{\dot t \in T/G}
H_i(G, \F_p[[\dot t]]\otimes M^*)$$ by \cite[Theorem 9.1.3 (b)]{R 2017}. 
Thus by \eqref{Shapiro} the surjectivity of
$$\bigoplus_{\dot t\in T/G} H_2(G,\F_p[[\dot t]]\otimes M^*)\to H_2(G,M^*)$$
is equivalent to generation of $H_2(G,M)$ by 
$\Cor_t(H_2(G_t,M^*))$, $t\in T$.
Dually this means that
$\bigcap_{t\in T} \Ker(\Res^2_t)=0$,
where $$\Res_t^2 \colon H^2(G,M)\longrightarrow H^2(G_t,M)$$
is the restriction map
(cf. \cite[Proposition 6.3.6]{R 2017}).

But, for every $t\in T$,
the restriction $\Res_t^2(\beta)$ of $\beta$ to $G_t$ is the $0$-map,
because $\alpha|_{B_t}$ is an isomorphism.
Hence $\bigcap_{t\in T} \Ker(\Res^2_t)=0$ implies $\beta =0$ and conversely $\beta\neq 0$ implies $\bigcap_{t\in T} \Ker(\Res^2_t)\neq 0$ and so non-surjectivity of $\kappa_2$. 

\medskip
(b)
As $B = M \rtimes G$, 
the group $H^1(G,\F_p)$
is isomorphic to the group of continuous monomorphisms of $\gamma:G\longrightarrow B$ with $\alpha\gamma=id$ modulo the conjugation by $M$
 (cf. \cite[Exercise 6.8.2 (b)]{RZ}).
Similarly,
for every $t \in T$,\
$H^1(G_t,M)$
is isomorphic to the group of continuous monomorphisms 
$\gamma:G_t \to M\rtimes G_t$ 
with $\alpha\gamma_t=id$ modulo the conjugation by $M$.
Thus $B_y\leq M\rtimes G_t$ gives an element $\beta_t\in H^1(G_t,M)$. Moreover, we can view $\beta_t$ as a homomorphism $H_1(G_t, M^*)\longrightarrow \F_p$ and its composition with $\varphi_1^{-1}$ from \eqref{Shapiro} gives the continuous homomorphism $$\delta_{\dot t}:H_n(G, \F_p[[\dot t]]\otimes M^*)\longrightarrow \F_p.$$ Besides since $\{G_y\mid y\in Y\}$ is a continuous family (\cite[Lemma 5.2.2]{R 2017}) closed for conjugation,  the maps $$\delta_{\dot t}:H_1(G, \F_p[[\dot t]]\otimes M^*)\longrightarrow \F_p$$ are well defined (i.e. $\beta _t$  and $\beta_{t^g}$ define the same $\delta_{\dot t}$). 

\medskip
Claim. The maps $\delta_{\dot t}$   determine a continuous homomorphism  $$\beta:\bigoplus_{\dot t\in T/G} H_1(G, \F_p[[\dot t]]\otimes M^*) \longrightarrow \F_p.$$ 

\bigskip
Indeed, let $U_i$ be the family of all open normal subgroups of $B$ such that  $M\cap U_i=1$ and $\alpha(U_i)$ acts trivially on $M$. Then there exist decompositions $(B,Y)=\varprojlim_{i\in I} (B/U_i,Y_i)$, $(G,T)=\varprojlim_{i\in I} (G_i,T_i)$  as  inverse limits of finite piles (see \cite[Lemma 5.6.4]{R 2017}), where $G_i=G/\alpha(U_i)$, $T_i=T/\alpha(U_i)$, and we have $B_i=B/U_i=M\rtimes G_i$. Put $T_i=Y_i/M$.  Then we have a splitting homomorphism $\alpha_i:M\rtimes G_i\rightarrow G_i$ and moreover $\alpha_i:(B_i,Y_i)\rightarrow (G_i,T_i)$ is rigid, for each $i$. This means that we have a family of $\beta_{i, t_i}\in H^1({G_i}_{t_i},M)$ which determine homomorphisms $\delta_{i, \dot t_i}:H_1({G_i}, \F_p[\dot t_i]\otimes M^{*})\longrightarrow \F_p$. This defines uniquely the homomorphism $$\beta_i:\oplus_{t_i\in T_i}H_1({G_i}, \F_p[\dot t_i]\otimes M^{*})\rightarrow \F_p.$$ Now since a projective limit commutes with profinite direct sums (cf. \cite[Theorem 5.3.4]{R 2017})
$\beta=\varprojlim_{i\in I}\beta_i$ defines the desired homomorphism $$\beta:\bigoplus_{\dot t\in T/G} H_1({G_i}, \F_p[\dot t_i]\otimes M^{*}) \longrightarrow \F_p.$$

Recall that  $H_1(G, \F_p[[T]]\otimes M^*)=\oplus_{\dot t \in T/G}
H_1(G, \F_p[[\dot t]]\otimes M^*)$. Hence $\kappa_1$ can be rewritten as  $$\kappa_1 \colon \oplus_{\dot t \in T/G}
H_1(G, \F_p[[\dot t]]\otimes M^*)\to H_1(G, M^*)$$
Then  the injectivity of $\kappa_1$ 
allows to extend
$\delta_{\dot t}, \dot t\in T/G$
to $$\beta \colon H_1(G,M^*)\rightarrow \F_p,$$
whose restriction to $G_t$ is $\beta_t$,
for every $t \in T$. 

Conversely, if $\kappa_1$ is not injective and $0\neq k$ is in the kernel of $\kappa_1$, then there exists a choice of $\delta_{\dot t}, \dot t\in T/G$  that determine a homomorphism $$\beta:\bigoplus_{t\in T} H_1(G, \F_p[[\dot t]]\otimes  M^{*}) \longrightarrow \F_p$$ with $\beta(k)\neq 0$. Then there exists $i$ such that for $\beta_i=\oplus_{i}\delta_{i, \dot t_i}$ one has  $\beta_i(k_i)\neq 0$, where $k_i$ is the image of $k$ under the projection $$\bigoplus_{\dot t\in T/G} H_1(G, \F_p[[\dot t]]\otimes M^*)\longrightarrow \oplus_{t_i\in T_i}H_1(G_i, \F_p[[\dot t_i]]\otimes M^*).$$ For each $t_i$ choose a homomorphism $\gamma_{t_i}:{G_i}_{t_i}\longrightarrow B_i$ with $\gamma_{t_i^g}(g_i)=\gamma_{t_i}(g_i)^g$ for all $g_i\in G_{t_i}, g\in G_i$ that represents $\beta_{i, t_i}$  determined   by $\delta_{\dot t_i}$. Define $Y_i=\bigcup_{t_i} B_i/\gamma_{t_i}({G_i}_{t_i})$.  Then  we have a morphism of piles $$\alpha_i:\bfB_i=(B_i, Y_i)\longrightarrow \bfG_i=(G_i,T_i)$$ and the embedding problem $(\alpha_i, \id)$ does not have solution, since $\beta_i$ 
 can not be an element of $H^1(G_i,M^*)=Hom(H_1(G_i,M), \F_p)$. Consider the pull-back 
\begin{equation*}
\xymatrix{
\hat\bfB \ar[r]^\alphahat \ar[d]^{\pi}
& \bfG \ar[d]_{\varphi}
\\
\bfB_i \ar[r]^\alpha & \bfG_i %\rlap{.}
}
\end{equation*} 
 Then the embedding problem $(\alphahat,\id)$  does not have a solution, since otherwise $(\alpha,\id)$ does, contradicting  the above.
\end{proof}

\bigskip
\begin{proof}[Proof of Theorem \ref{inverse limit of free products}]
  Suppose for each $p\in \pi$ and every simple $p$-primary module $M$ the map
 $\kappa_2$ is surjective.  Consider a  finite rigid embedding problem
\begin{equation*}
\big(\varphi\colon \bfG\to \bfA,\ \alpha\colon \bfB\to \bfA)\big)
\tag{\ref{EP}}
\end{equation*}
with $M=|\Ker(\alpha)|$ is minimal normal elementary abelian $p$-group for $p\in\pi $ (cf.   Remark~\ref{rel proj}(b)).

Let $\Bhat = B \times_A G$, $\hat Y=Y\times_X T $ be the pull-backs. This gives a pull-back $\hat\bfB=\bfB\times_{\bfA}\bfG$ of piles
and let $\pi \colon \hat\bfB \to \bfB$,
$\alphahat \colon \hat\bfB \to \bfG$
be the coordinate projections.  
Thus we have a commutative diagram
\begin{equation*}
\xymatrix{
\hat\bfB \ar[r]^\alphahat \ar[d]^{\pi}
& \bfG \ar[d]_{\varphi}
\\
\bfB \ar[r]^\alpha & \bfA %\rlap{.}
}
\end{equation*}
 By \cite[Lemma 4.8]{HZ}  $\hat\alpha$ is rigid. 

By Lemma \ref{inf EP} (a) if $\kappa_2$ is surjective then  $\hat\alpha$ splits, i.e  there is a morphism
$\psi \colon \bfG \to \bfB$
such that $\alphahat \circ \psi = \id$.  
 
 By Lemma \ref{inf EP} (b) if $\kappa_1$ is injective then the embedding problem $(\id,\hat\alpha)$ has a solution $\hat\gamma$ in which case
$\pi\circ\hat\gamma$ is the required solution. 

\bigskip
Conversely,    if $\bfG$ is not $\pi$-projective then
\eqref{EP} does not have any  solution and so  $(\id,\hat\alpha)$ does not have solution  so by Lemma \ref{inf EP} either $\kappa_2$ is not surjective or   $\kappa_1$ is not injective.
\end{proof}

 \bigskip
\begin{proof}[Proof of Corollary \ref{solvable}] By Remark \ref{rel proj}(b) we may consider only $\C$-embedding problems $(\alpha,\varphi)$ with $\ker(\alpha)$ minimal normal. But  if $\C$ is the class of all finite solvable groups then such $\ker(\alpha)$ is elementary abelian. Thus the result follows from Theorem \ref{inverse limit of free products}. \end{proof}

\begin{cor} Let $(G,T)$ be a  pro-$p$ pile. Then $(G,T)$ is projective if and only if $ H_i(G, \F_p[[T]])\to H_i(G, \F_p)$,
induced from $t \mapsto 1$ for every $t \in \dot t$,
is surjective for $i=2$ and injective for $i=1$.

\end{cor}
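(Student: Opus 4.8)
The plan is to read the statement as the case $\pi=\{p\}$ of Theorem~\ref{inverse limit of free products}, once two features special to pro-$p$ groups are recorded: that ordinary projectivity of a pro-$p$ pile is the same as $\{p\}$-projectivity, and that the trivial module is the only simple module in play.

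First I would match the two notions of projectivity. For a pro-$p$ pile the word ``projective'' means $\C$-projective for $\C$ the class of finite $p$-groups, a class closed under subgroups, quotients and extensions, so that the embedding problems of \eqref{EP} stay within pro-$p$ piles. I claim this is equivalent to $\{p\}$-projectivity. Every $\{p\}$-embedding problem has an abelian $p$-group as kernel, hence is a $\C$-embedding problem, which gives one implication at once. For the converse I would invoke the induction of Remark~\ref{rel proj}(b): a finite rigid $\C$-embedding problem $(\varphi,\alpha)$ factors through the quotients of $B$ by a normal chain inside $\Ker(\alpha)$, reducing to the case where $M=\Ker(\alpha)$ is a minimal normal subgroup of the finite $p$-group $B$. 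A minimal normal subgroup of a nilpotent group is central of prime order, so $M\cong\Z/p$ is elementary abelian and the reduced problem is a $\{p\}$-embedding problem. Hence solvability of all $\{p\}$-embedding problems yields a solution of the original one, and the two notions coincide.

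Second I would pin down the simple modules. As $G$ is pro-$p$, the only simple $p$-primary $\F_pG$-module is the trivial module $\F_p$: a finite simple $\F_pG$-module $M$ is a finite $p$-group on which $G$ acts through a finite $p$-group quotient, and a $p$-group acting on a nonzero $\F_p$-vector space has a nonzero fixed subspace $M^G\ne0$; this is a submodule, so simplicity forces $M=M^G\cong\F_p$. Consequently $\F_p[[T]]\otimes M=\F_p[[T]]$ is the only tensor factor that occurs.

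Finally I would apply Theorem~\ref{inverse limit of free products} with $\pi=\{p\}$. By the first step $(G,T)$ is projective if and only if it is $\{p\}$-projective, and the theorem rewrites this as the requirement that for every simple $p$-primary module $M$ the map $H_i(G,\F_p[[T]]\otimes M)\to H_i(G,M)$ be surjective for $i=2$ and injective for $i=1$. By the second step the only module to test is $M=\F_p$, for which the condition reads precisely that $H_i(G,\F_p[[T]])\to H_i(G,\F_p)$ is surjective for $i=2$ and injective for $i=1$, as asserted. I expect the only genuinely non-formal point to be the first reduction, identifying pro-$p$ projectivity with $\{p\}$-projectivity through the structure of minimal normal subgroups of $p$-groups; the module computation and the appeal to the theorem are then immediate.
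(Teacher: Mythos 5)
Your proposal is correct and follows exactly the route the paper intends: the corollary is stated without proof precisely because it is the specialization of Theorem~\ref{inverse limit of free products} to $\pi=\{p\}$, with the reduction to elementary abelian kernels supplied by Remark~\ref{rel proj}(b) (as in the proof of Corollary~\ref{solvable}) and with $M=\F_p$ as the unique simple $p$-primary module by the standard fixed-point argument for pro-$p$ groups. Both of your preliminary reductions are accurate, so nothing is missing.
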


\section{open problems}

1. Let $(G,T)$ be a profinite projective pile and $T'=\{t\in T\mid G_t\neq 1\}$. Suppose that $G$ is finitely generated. 

(a) Is $T'/G$ finite?

(b) Is $G_t$ a retract of $G$?   

\bigskip
2. Can one embed a projective profinite pile $(G,T)$ into a free profinite product $H=\coprod_{x\in X} H_x\amalg F$ with $F$ free profinite such that for every $t\in T$  one has
$G_t=H_x^h\cap G$ for some $x\in X$?

\medskip
{\it If there exists a continuous section $T/G\longrightarrow T$ this follows from Proposition \ref{into free prod}. The pro-$p$ case is the subject of \cite{HZ}.}

\bigskip
3. Let $F$ be an abstract free group and $H$ a finitely generated subgroup of $F$. Suppose $(\widehat F,T)$ is a projective pile with $|T|>1$ and  $\overline H=\widehat F_t$ for some $t\in T$. Is $H$ a free factor of $F$?

\medskip
{\it This is the case if $\overline H$ is a free factor of $\widehat F$, see \cite{PP, W, GJ} for 3 different proofs.}

\bigskip
4. Let $F$ be a finitely generated free group and $1\neq g\in F$.  Suppose $(\widehat F,T)$ is a projective pile with $|T|>1$ and $g\in \widehat F_t$ for some $t\in T$. Does $g$ belong to a non-trivial free factor of $F$?

\bigskip
5. Is it true that $(G,T)$ is a projective prosolvable pile if and only if $G$ acts on a profinite tree with trivial edge stabilizers and vertex stabilizers being point stabilizers $G_t$, $t\in T$? 

\medskip
Theorem below states that this is true if $T\longrightarrow T/G$ admits a continuous section.  

\begin{thm}

\begin{enumerate}

\item[(i)]
Let  
$G$ be a prosolvable group  acting on a profinite tree $D$
 with trivial edge stabilizers. Then $(G,V(D))$ is $\C$-projective, where $\C$ is the class of all finite solvable groups. 
 
 \item[(ii)] Let $\C$ be a class of finite groups closed for subgroups, quotients and extensions. Let $(G,T)$ be a projective pro-$\C$ pile. Suppose there exists a continuous section $\sigma: T/G\longrightarrow T$. Then $G$ acts on a pro-$\C$ tree with trivial edge stabilizers and non-trivial vertex stabilizers being stabilizers of points in $T$.
 \end{enumerate}
 \end{thm}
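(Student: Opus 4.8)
The plan is to handle the two parts by quite different tools: part (i) reduces to the homological criterion of Corollary~\ref{solvable}, while part (ii) is geometric and rests on Proposition~\ref{into free prod} together with the structure theory of free pro-$\C$ products.

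For part (i), since $G$ is prosolvable the pair $(G,V(D))$ is a prosolvable pile, so by Corollary~\ref{solvable} it suffices to check, for every prime $p$ and every simple $p$-primary $\F_pG$-module $M$, that the map
\begin{equation*}
H_i\big(G,\F_p[[V(D)]]\otimes M\big)\longrightarrow H_i(G,M),
\end{equation*}
induced by $t\mapsto 1$, is surjective for $i=2$ and injective for $i=1$. The key input is the fundamental exact sequence of the pro-$\C$ tree $D$ with $\F_p$-coefficients,
\begin{equation*}
0 \to \F_p[[E^*(D)]] \xrightarrow{\ \partial\ } \F_p[[V(D)]] \xrightarrow{\ \eps\ } \F_p \to 0,
\end{equation*}
where $\eps$ is the augmentation $t\mapsto 1$ (see \cite{R 2017}); this is exact with $\F_p$-coefficients because $\C$ contains $\Z/p$ for every $p$. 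Since the edge stabilizers are trivial, $G$ acts freely on $E^*(D)$ off the distinguished point, so $\F_p[[E^*(D)]]$ is a free $\F_p[[G]]$-module. I would then tensor over $\F_p$ with $M$ (diagonal action), which stays exact as we work over a field, and use the untwisting isomorphism to see that $\F_p[[E^*(D)]]\otimes M$ is again free, so that $H_i(G,\F_p[[E^*(D)]]\otimes M)=0$ for $i\ge 1$. Feeding the resulting short exact sequence into the long exact sequence for $H_*(G,-)$, the map above is the one induced by $\eps\otimes\id_M$, and its neighbouring terms are positive-degree homology groups of the free module, hence vanish. Therefore the map is an isomorphism for $i=2$ and injective for $i=1$, and Corollary~\ref{solvable} yields that $(G,V(D))$ is $\C$-projective.

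For part (ii), I would first invoke Proposition~\ref{into free prod}, whose hypotheses hold since $\C$ is closed under subgroups, quotients and extensions and a continuous section $\sigma$ is given: this embeds $G$ into a free pro-$\C$ product $H=\coprod_{s\in\image(\sigma)}G_s\amalg F$ with $F$ free pro-$\C$, and moreover $\{G_t\mid t\in T\}=\{H_s^h\cap G\mid h\in H,\ s\in\image(\sigma)\}$. Next I would realise $H$ on the standard pro-$\C$ tree $D$ attached to this free-product decomposition (\cite{R 2017}), in the form where the free factor $F$ is carried by the edges rather than the vertices, so that $H$ acts with trivial edge stabilizers and with non-trivial vertex stabilizers exactly the conjugates $H_s^h$. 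Restricting the action to the closed subgroup $G$, the space $D$ is still a pro-$\C$ tree, the edge stabilizers become $G\cap 1=1$, and each vertex stabilizer becomes $G\cap H_s^h$, which by the displayed equality is a point stabilizer $G_t$; conversely every non-trivial $G_t$ arises in this way. This is precisely the asserted action of $G$.

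The main obstacle is the geometric half, part (ii): one must cite the correct form of the structure theorem for free pro-$\C$ products in which $F$ contributes no non-trivial vertex stabilizer (otherwise procyclic sub-factors of $F$ would produce spurious stabilizers $\hat\Z_\C^{\,h}$ at vertices, which are not among the $G_t$), and one must verify carefully that passing from the $H$-tree to its restriction along $G\hookrightarrow H$ preserves the pro-$\C$-tree structure and computes the stabilizers as the intersections $G\cap H_s^h$. Part (i) is comparatively routine once the fundamental sequence and the freeness of $\F_p[[E^*(D)]]$ are available; the only delicate point there is the exactness of that sequence with $\F_p$-coefficients, which is guaranteed by $\C$ being the class of all finite solvable groups.
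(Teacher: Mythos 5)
Your proposal is correct and follows the paper's own proof essentially verbatim: part (i) is exactly the paper's argument (the fundamental sequence $0\to\F_p[[E^*,*]]\to\F_p[[V]]\to\F_p\to 0$, freeness of $\F_p[[E^*,*]]$ over $\F_p[[G]]$ from trivial edge stabilizers, and the long exact homology sequence feeding into Theorem \ref{inverse limit of free products}/Corollary \ref{solvable}), and part (ii) is the paper's combination of Proposition \ref{into free prod} with the standard pro-$\C$ tree of the free product from \cite[Section 6.3]{R 2017} and restriction of the action to $G$. The caveat you raise about the free factor $F$ is legitimate but does not make your route different from the paper's --- the paper simply writes $H=\coprod_{x\in X}H_x$, silently absorbing $F$, and cites \cite[Lemma 6.3.2(b)]{R 2017}, whereas you make explicit the needed form of the standard tree in which $F$ acts freely (carried by edges), which is precisely what rules out spurious vertex stabilizers of the form $G\cap F^h$.
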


  \begin{proof} (i).
  Suppose $G$ acts on a profinite tree $D$ with trivial edge stabilizers. This means that we have the short exact sequence of $F_p[[G]]$-modules for any prime $p$ 
\begin{equation*}
0 \to \F_p[[E^*,*]] \to \F_p[[V]] \to \F_p \to 0
\end{equation*}
where $\F_p[[E^*]]$ is free $\F_p[[G]]$-module and $V$ is the set of vertices of $D$ (see \cite[Section 2.4]{R 2017}). 

Then, after tensoring it with $\widehat\otimes_{\F_p[[G]]} M$,
we get a long exact sequence in homology
\begin{equation*}
Tor_2^G(M,\F_p[[E^*,*]]) \to Tor_2^G(M,\F_p[[V]]) \to H_2(G,M) \to
Tor_1^G(M,\F_p[[E^*,*]]).
\end{equation*}
Since $F_p[[E^*,*]]$ is a free $\F_p[[G]]$-module,
the left and right terms are $0$.
It remains to observe that $V = \bigdotcup_{\dot v\in V/G} \dot v$
is the disjoint union of orbits,
so $\F_p[[V]] = \bigoplus_{\dot v} \F_p[[\dot v]]$,
and the pro-$p$ direct sum commutes with homology
(\cite[Theorem 9.1.3(b)]{R 2017}). Since $Tor_2^G(M,\F_p[[V]])=H_2(G,\F_p[[V]]\otimes M)$  the result follows from Theorem \ref{inverse limit of free products}.

\medskip

 (ii) By Proposition \ref{into  free prod} $G$ embeds into a free pro-$\C$  product of $H=\coprod_{x\in X} H_x$ such that $\{G_t\mid t\in T\}=\{H_x^h\cap G\mid h\in H, x\in X\}$. Let $S$ be the standard pro-$\C$ tree for  $H=\coprod_{x\in X} H_x$
(\cite[Section 6.3]{R 2017})
By \cite[Lemma 6.3.2(b)]{R 2017}
$H$ acts on $S$,
with trivial edge stabilizers
and vertex stabilizers being conjugates of $H_x$.
Then the restriction of this action to $G$
gives the required action.

\end{proof}

\end{document}